\documentclass[16pt]{article}

\usepackage{authblk}
\usepackage[english]{babel}
\usepackage[utf8x]{inputenc}
\usepackage[T1]{fontenc}
\usepackage{setspace}
\usepackage{fancyhdr}

\usepackage[a4paper,top=1in,bottom=1in,left=1in,right=1in,marginparwidth=1.75cm]{geometry}
\onehalfspacing

\usepackage{amsmath}
\usepackage{amsthm}
\usepackage{amssymb}

\usepackage{color}
\usepackage{graphicx}
\usepackage[colorinlistoftodos]{todonotes}
\usepackage[colorlinks=true, allcolors=blue]{hyperref}
\usepackage{fancyhdr}
\usepackage{listings}
\usepackage{chngcntr}
\usepackage[ruled, 
lined, 
commentsnumbered]{algorithm2e}

\pagestyle{fancy}
\fancyfoot{}
\fancyfoot[R]{\thepage}
\newtheorem{theorem}{Theorem}
\newtheorem{conjecture}{Conjecture}
\newtheorem{lemma}{Lemma}
\newtheorem{corollary}{Corollary}
\theoremstyle{definition}
\newtheorem{definition}{Definition}

\counterwithout{equation}{section}

\title{Bimonotone Subdivisions of Point Configurations in the Plane}
\date{\today}
\author[1,2]{Elina Robeva} 
\author[1]{Melinda Sun}
\affil[1]{Massachusetts Institute of Technology}
\affil[2]{The University of British Columbia}

\begin{document}
\maketitle

\begin{abstract}
Bimonotone subdivisions in two dimensions are subdivisions all of whose sides are vertical or have nonnegative slope. They correspond to statistical estimates of probability distributions of strongly positively dependent random variables. The number of bimonotone subdivisions compared to the total number of subdivisions of a point configuration provides insight into how often the random variables are positively dependent. We give recursions as well as formulas for the numbers of bimonotone and total subdivisions of $2\times n$ grid configurations in the plane. Furthermore, we connect the former to the large Schröder numbers. We also show that the numbers of bimonotone and total subdivisions of a $2\times n$ grid are asymptotically equal. We then provide algorithms for counting bimonotone subdivisions for any $m \times n$ grid. Finally, we prove that all bimonotone triangulations of an $m \times n$ grid are connected by flips. This gives rise to an algorithm for counting the number of bimonotone (and total) triangulations of an $m\times n$ grid.
\end{abstract}

\section{Introduction}
In this paper we study bimonotone subdivisions in the plane, which are intimately related to nonparametric density estimation (cf. Section~\ref{sec:motivation}). Let $\mathcal A = \{a_1,\ldots, a_m\}\subset \mathbb R^2$ be a {\em point configuration} in the plane. A {\em subdivision} of $\mathcal A$ is a collection of convex polygons whose vertices lie in $\mathcal A$ such that the union of the polygons is the convex hull of $A$ and each pair of polygons either does not intersect or intersects at a common vertex or side. A \textit{triangulation} of $\mathcal A$ is a subdivision of $\mathcal A$ for which all polygons are triangles.
For example, in Figure~\ref{fig:subdivisions}, the leftmost and rightmost drawings are subdivisions, and the rightmost is also a triangulation. The second is not a subdivision because two distinct polygons intersect in their interiors, and the third is not a subdivision because one of the polygons is not convex. Note that not all points in $\mathcal A$ need to be used as vertices of the polygons in the subdivision/triangulation. For more details on subdivisions and triangulations, please refer to the textbook~\cite{triangulations}.

\begin{figure}[h]
\centering
\includegraphics[width=0.8\textwidth]{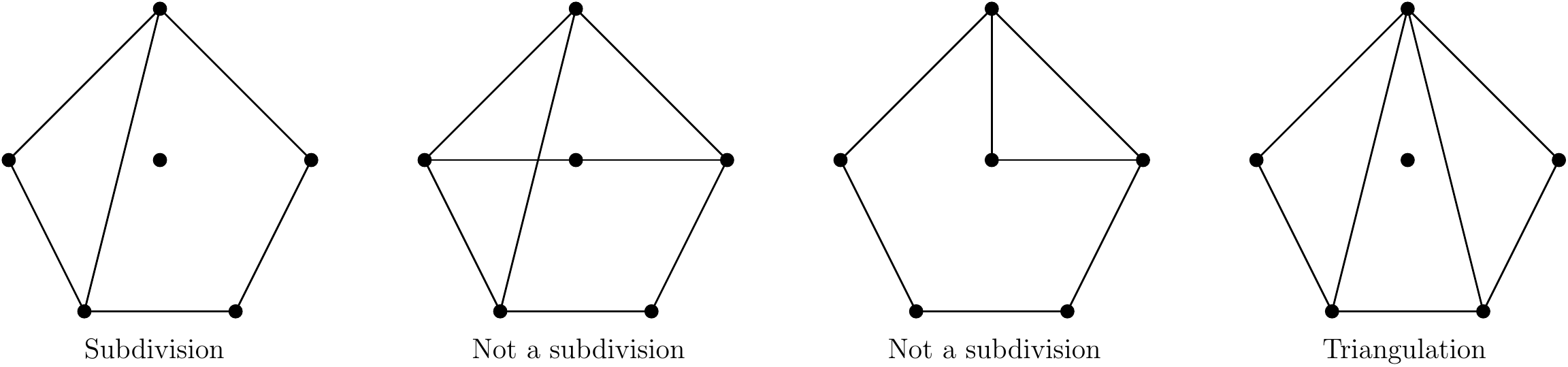}
\caption{\label{fig:subdivisions}Examples of subdivisions and non-subdivisions.}
\end{figure}

\begin{definition}
A \textit{bimonotone polygon} is a polygon for which all edges have either vertical or nonnegative slope.
A \textit{bimonotone subdivision} is a subdivision for which all component polygons of the subdivision are bimonotone.
\end{definition}

\noindent For example, Figure~\ref{fig:bimonotone} below shows two subdivisions, one of which is bimonotone. Bimonotone polytopes are precisely those convex polytopes that are closed under taking coordinate-wise minima and maxima of pairs of points. They were studied in the 1970's by George Bergman and discussed in \cite{BP75} and \cite{Top76}. 
Bimonotone polytopes were later studied in computer science~\cite{QT06} and in discrete geometry under the name {\em distributive}~\cite{FelKna}. 

\begin{figure}[h]
\centering
\includegraphics[width=0.4\textwidth]{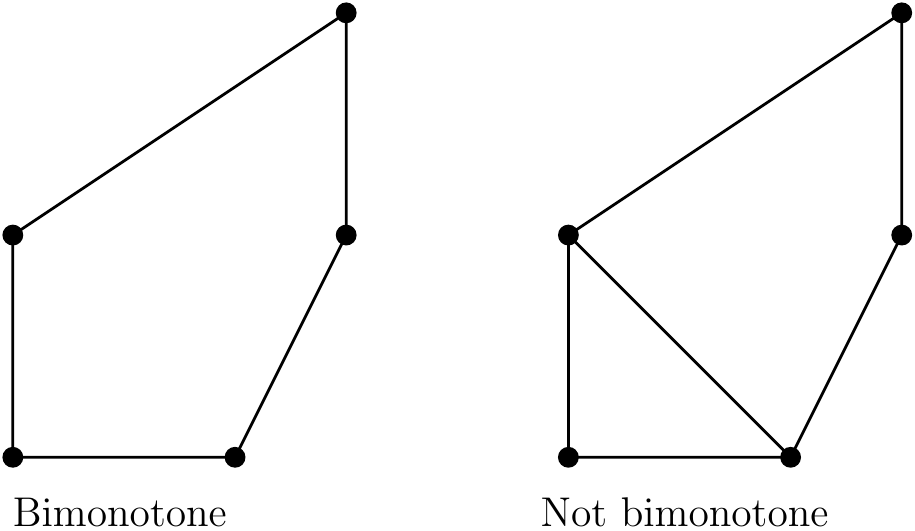}
\caption{\label{fig:bimonotone}Examples of bimonotone and non-bimonotone subdivisions.}
\end{figure}
\bigskip

\subsection{Motivation.}\label{sec:motivation}
For a point configuration $\mathcal A=\{a_1,\ldots, a_m\}$ and a set of heights, or tent poles, $\{h_1,\ldots, h_m\}$, one above each of the points in $\mathcal A$, we can define a {\em tent function} as the smallest concave function whose value at $a_i$ is at least as big as $h_i$ for each $i=1, \ldots, m$. In other words, the tent function is formed by spreading a "tarp" over the poles, see Figure~\ref{fig:tentfunction}.
Each tent function induces a subdivision of $\mathcal A$ composed of the polygons in the plane above which the tent function is linear. In fact, the subdivisions of $\mathcal A$ that arise from a tent function are called {\em regular subdivisions}. For most point configurations, there exist subdivisions that are not regular~\cite{triangulations}.

\begin{figure}[h]
\centering
\includegraphics[width=0.5\textwidth]{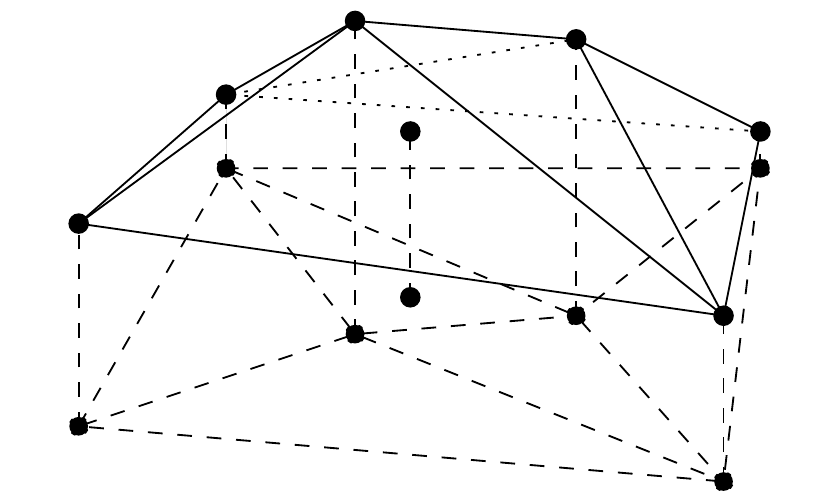}
\caption{\label{fig:tentfunction}Example of a tent function from \cite{maxlikelihood}.}
\end{figure}

Tent functions show up in the field of nonparametric statistics~\cite{Wasserman} as the estimates of log-concave functions~\cite{maxlikelihood}. (A function is log-concave if its logarithm is concave.) More precisely, if the points in $\mathcal A$ are the samples drawn from an unknown log-concave density $p$, then, the maximum likelihood estimate of $p$ will be $\hat p = \exp(f)$, where $f$ is a tent function with tent poles centered at the samples $\mathcal A$~\cite{maxlikelihood}. It was recently shown that if the unknown density $\hat p$ is log-concave and log-supermodular (also known as multivariate totally positive of order 2, or MTP$_2$, cf. Definition~\ref{def:MTP2}), then the maximum likelihood estimate of $p$ is a density $\hat p=\exp(f)$, where $f$ is a tent function which induces a bimonotone subdivision of the point configuration $\mathcal A$.

\begin{definition}\label{def:MTP2}
A function $f:\mathbb R^d \to\mathbb R\cup\{-\infty\}$ is \textit{supermodular} if $f(x) + f(y) \leq f(\min(x,y)) + f(\max(x,y))$ for all $x,y$ in the domain of $f$. A density $p$ is {\em log-supermodular}, or {\em multivariate totally positive of order 2} ({\em MTP$_2$}) if $p=\exp(f)$, where $f$ is supermodular.
\end{definition}

If the density $p$ of a random vector $X = (X_1,\ldots, X_d)$ is MTP$_2$, then, the coordinates of $X$ are strongly positively dependent on each other. In fact, MTP$_2$ implies another strong form of positive dependence called {\em positive association}~\cite{FKG}. Despite it being such a strong form of dependence, the MTP$_2$ property holds for a variety of real-world distributions as well as many well-studied families of distributions~\cite{BF00, FLSUWZ, Fel73, SH15, LUZ17}.

For this reason, the authors of~\cite{maxlikelihood2} study the problem of estimating a log-concave and MTP$_2$ density. They show that if the samples lie in $\mathbb R^2$, the maximum likelihood estimate $\hat p$ equals $\exp(f)$, where $f$ is a  tent function, which induces a bimonotone subdivision~\cite{maxlikelihood2}. One of the important remaining questions is that of characterizing how large the family of log-concave and MTP$_2$ densities is, especially compared to the family of log-concave densities. Knowing this would shed light on the statistical complexity of the problem of estimating log-concave and MTP$_2$ densities.. If the log-concave and MTP$_2$ family is much smaller, then its statistical complexity should intuitively be much better. We here show that in two dimensions the two families are asymptotically the same in size (cf. Theorems~\ref{thm:1} and \ref{thm:2}). These findings are consistent with the work. We leave the same computation in dimension $d\geq 3$ to future work, conjecturing that in these cases the MTP$_2$ and log-concave family is much smaller than the log-concave family.

\smallskip

Triangulations of polygons can be counted using the flip graph method~\cite{triangulations}. It is also known that the number of triangulations of an $n$-sided polygon is equal to the $(n-2)$th Catalan number~\cite{triangulations}. However, little research has been conducted on the number of subdivisions and triangulations of grids, or of bimonotone subdivsions and triangulations of any point configuration.
\smallskip
\smallskip

\subsection{Organization of the paper.} 
In Section~\ref{sec:recursions}, we study point configurations $\mathcal A\subset \mathbb R^2$ whose points lie on two rows of a rectangular grid. We provide a recursion for the number of bimonotone and total subdivisions of grids with two rows of possibly different numbers of points. We use these recursins in Section~\ref{sec:general_form} to find a general formula for these numbers. Furthermore, we show that the number of bimonotone subdivisions of a $2 \times n$ grid is equal to $2^{n-2}$ multiplied by the $n$th Schröder number. In Section~\ref{sec:bim_triangulations} we show that bimonotone triangulations using all of the vertices vertices of a grid configuration $\mathcal A$ form a connected flip graph. Using this result, in Section~\ref{sec:algorithms}, we present two algorithms for counting the numbers of bimonotone and total subdivisions and triangulations of two-dimensional grids. In Section~\ref{sec:conclusion} we conclude with further research questions.

\section{Recursions}\label{sec:recursions}

In this section we derive recursions for the number of bimonotone subdivisions and total subdivisions for grids consisting of two rows. Let $P_{m,n}$ denote a grid with 2 rows that has $m$ points in the top row and $n$ points in the bottom, aligned at the left, and let the bottom left point be at the origin. Let $B_{m,n}$ be the number of bimonotone subdivisions of this configuration. We set up a recursion to count $B_{m,n}$.

\begin{lemma}
The number of bimonotone subdivisions $B_{m,n}$ of $P_{m,n}$ is $2B_{m,n-1} + 2B_{m-1,n} - 2B_{m-1,n-1}$ if $m>n$, $2B_{m,n-1}$ if $m=n$, and $0$ if $m<n$.
\end{lemma}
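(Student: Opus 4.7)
The plan is to handle the three cases separately.

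First, for $m<n$, I would note that the rightmost edge of the convex hull of $P_{m,n}$ runs from $(m-1,1)$ to $(n-1,0)$, since $(n-1,0)$ lies strictly to the right of $(m-1,1)$. This segment has slope $1/(m-n)<0$ and must appear as a polygon edge in every subdivision. Since it is not bimonotone, no bimonotone subdivision can exist, so $B_{m,n}=0$.

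For $m\geq n$, the strategy is to classify each bimonotone subdivision $S$ of $P_{m,n}$ by its \emph{rightmost polygon} $R$, defined as the polygon of $S$ whose right edge is the right boundary edge of the convex hull (slanted when $m>n$, vertical when $m=n$). Because $R$ is convex with all its vertices on the two horizontal lines $y=0$ and $y=1$, it has at most four vertices: the fixed right corners $(m-1,1)$ and $(n-1,0)$, together with a leftmost-top vertex $(t^*,1)$ and a leftmost-bottom vertex $(b^*,0)$ (with $R$ collapsing to a triangle when $t^*=m-1$ or $b^*=n-1$). Bimonotonicity of $R$'s left edge forces $t^*\geq b^*$, and clearly $(t^*,b^*)\neq(m-1,n-1)$. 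Any intermediate point of $\mathcal{A}$ lying between a corner and a leftmost vertex on $R$'s horizontal edges is inert: not a vertex of $R$ by collinearity, and not of any other polygon, since those row segments lie on the convex hull boundary. Deleting $R$ from $S$ leaves a bimonotone subdivision of the sub-grid $P_{t^*+1,b^*+1}$, and this correspondence is a bijection, yielding the summation formula
\[
B_{m,n} \;=\; \sum_{\substack{0 \leq b^* \leq t^* \leq m-1\\ b^* \leq n-1,\ (t^*,b^*)\neq(m-1,n-1)}} B_{t^*+1,b^*+1}.
\]

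I would then derive the stated recursions by splitting this sum according to whether $t^*=m-1$. The contribution from $t^*=m-1$ is $\sum_{b^*=0}^{n-2}B_{m,b^*+1}$. The contribution from $t^*\leq m-2$ ranges over the full poset of $P_{m-1,n}$ when $m>n$ and of $P_{n-1,n-1}$ when $m=n$; applying the same summation formula to these smaller grids (and adding back the one corner term that the formula itself excludes) shows that these contributions equal $2B_{m-1,n}$ and $2B_{n-1,n-1}$ respectively. Writing the analogous decomposition for $B_{m,n-1}$ and subtracting causes the tail sum $\sum_{b^*}B_{m,b^*+1}$ to telescope away, producing $B_{m,n}=2B_{m,n-1}+2B_{m-1,n}-2B_{m-1,n-1}$ for $m>n$ and $B_{n,n}=2B_{n,n-1}$ for $m=n$.

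The main obstacle will be the rightmost-polygon classification itself: verifying that $R$'s vertex structure is fully captured by the two corner and two leftmost vertices (no hidden vertices on its horizontal sides), that $t^*\geq b^*$ is the only nontrivial constraint from bimonotonicity, and that the removal map really is a bijection onto bimonotone subdivisions of $P_{t^*+1,b^*+1}$. Once this structural description is in hand, the remaining derivation of the recursions is a short bookkeeping exercise.
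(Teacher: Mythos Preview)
Your argument is correct and takes a genuinely different route from the paper. The paper never writes down a summation formula; instead it argues directly by inclusion--exclusion on whether the top-right vertex $(m-1,1)$ and the bottom-right vertex $(n-1,0)$ carry any ``extra'' interior edges. When $(m-1,1)$ is connected only to its boundary neighbors, toggling the edge $(m-2,1)\text{--}(n-1,0)$ gives a $2$-to-$1$ map onto bimonotone subdivisions of $P_{m-1,n}$, and symmetrically for $(n-1,0)$; since the two rightmost vertices cannot both carry extra edges (those edges would cross), inclusion--exclusion yields the recursion in one step. Your rightmost-polygon decomposition is more refined: it produces the intermediate identity $B_{m,n}=\sum B_{t^*+1,b^*+1}$ over all admissible left-corner pairs $(t^*,b^*)$, from which the recursion follows by differencing. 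The paper's route is shorter and avoids the bookkeeping with the corner term, while yours yields a strictly stronger structural statement (the summation formula) that could be reused, for instance, in a generating-function argument. One small point to make explicit in your write-up: the base value $B_{1,1}=1$ (the degenerate two-point configuration) is needed for the summation to be well-defined when $(t^*,b^*)=(0,0)$, and you should also note that the left edge of $R$ never contains interior lattice points (its vertical span is $1$), so exactly one polygon of the residual subdivision abuts it and the gluing in the inverse map is unambiguous.
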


\begin{figure}[h]
\centering
\includegraphics[width=0.4\textwidth]{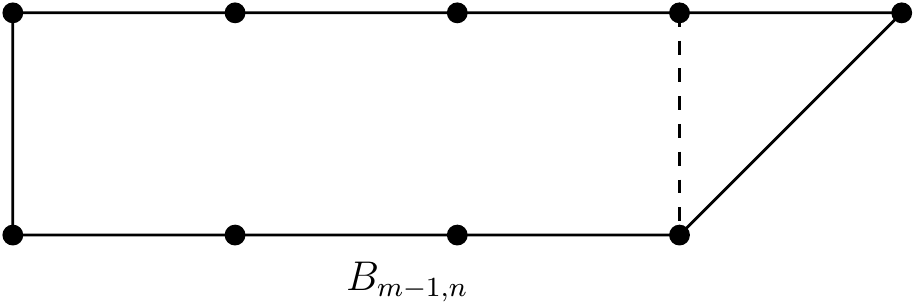}
\caption{\label{fig:case1}Top right vertex unconnected.}
\end{figure}

\begin{proof}
For $m>n$, if the top right vertex $(m-1,1)$ is not connected to any vertex other than its left neighbor $(m-2,1)$ and the bottom right vertex $(n-1,0)$, as shown in Figure~\ref{fig:case1}, then there are $2B_{m-1,n}$ bimonotone subdivisions. This is because if we pair each bimonotone subdivision with the subdivision that has the edge connecting $(n-1,0)$ and $(m-2,1)$ toggled, then each pair corresponds to the unique bimonotone subdivision of $P_{m-1,n}$ with the same internal edges.

\begin{figure}[h]
\centering
\includegraphics[width=0.4\textwidth]{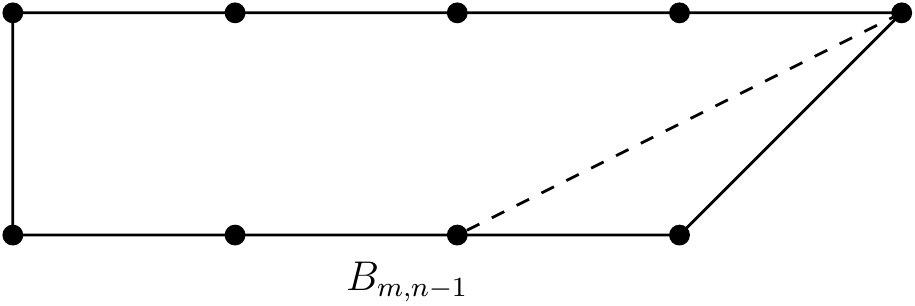}
\caption{\label{fig:case2}Bottom right vertex unconnected.}
\end{figure}

By the same reasoning, there are $2B_{m,n-1}$ bimonotone subdivisions when the bottom right vertex $(n-1,0)$ is not connected to any points but $(n-2,0)$ and $(m-1,1)$, as shown in Figure~\ref{fig:case2}.

\begin{figure}[h]
\centering
\includegraphics[width=0.4\textwidth]{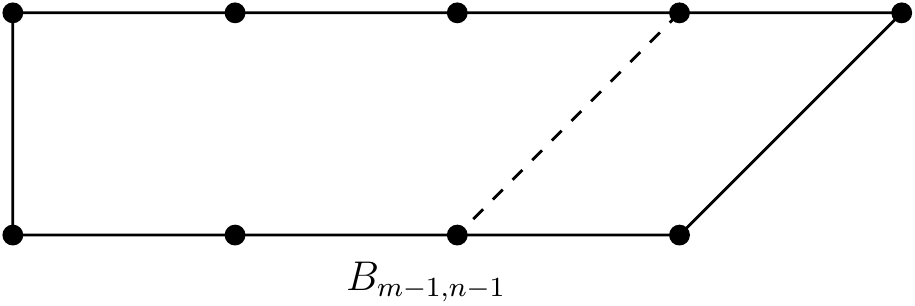}
\caption{\label{fig:case3}Top and bottom right vertices unconnected.}
\end{figure}

It is not possible for both vertices to be connected to points other than their external neighbors, as the edges would intersect at a point not in the configuration. And when both are not connected, as in Figure~\ref{fig:case3}, there are $2B_{m-1,n-1}$ bimonotone subdivisions because similarly to above, the bimonotone subdivisions correspond to bimonotone subdivisions of $P_{m-1,n-1}$. So subtracting the overlap, $B_{m,n} = 2B_{m,n-1} + 2B_{m-1,n} - 2B_{m-1,n-1}$.

When $m<n$, there are 0 bimonotone subdivisions because the edge connecting $(m-1,1)$ and $(n-1,0)$ has a negative slope.

\begin{figure}[h]
\centering
\includegraphics[width=0.3\textwidth]{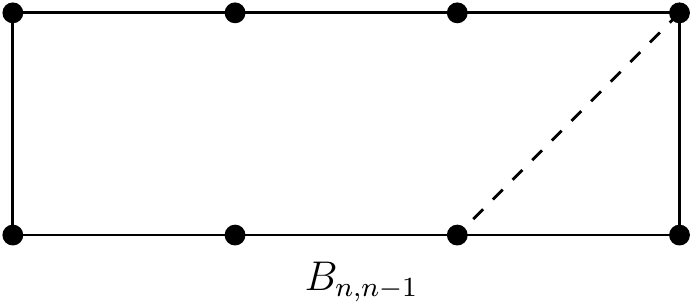}
\caption{\label{fig:case4}$m=n$, bottom right vertex unconnected.}
\end{figure}

For $m=n$, as in Figure~\ref{fig:case4}, it is impossible for the bottom right vertex to be connected to any point but its neighbors as the slope would be negative. So just as in the first case of $m>n$, $B_{m,n} = 2B_{m,n-1}$.

In summary,
\[B_{m,n} =
\begin{cases} 
2B_{m,n-1} + 2B_{m-1,n} - 2B_{m-1,n-1} & m>n \\
2B_{m,n-1} & m=n \\
0 & m<n
\end{cases}.\]
\end{proof}

Formulas for $B_{m,n}$ for small fixed $n$ can be found through the recursion. These are shown in Table~\ref{tab:bimonotonesub}.

\begin{table}[h]
\centering
\begin{tabular}{l|l}
$n$ & $B_{m,n}$ \\\hline
1 & $2^{m-2}$ \\
2 & $2^{m-2}(m)$ \\
3 & $\frac{2^{m-2}}{2}(m^2+3m-6)$ \\
4 & $\frac{2^{m-2}}{6}(m^3+9m^2-4m-60)$ \\
5 & $\frac{2^{m-2}}{24}(m^4+18m^3+47m^2-258m-600)$
\end{tabular}
\caption{\label{tab:bimonotonesub}Number of Bimonotone Subdivisions of $P_{m,n}$.}
\end{table}

Now let $A_{m,n}$ be the the number of subdivisions, not necessarily bimonotone, of $P_{m,n}$.

\begin{lemma}
The number of subdivisions of $P_{m,n}$ is $A_{m,n} = 2A_{m,n-1} + 2A_{m-1,n} - 2A_{m-1,n-1}$.
\end{lemma}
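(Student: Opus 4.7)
The plan is to adapt the proof of the preceding lemma for bimonotone subdivisions, noting that the geometric arguments used there do not depend on the bimonotone condition; what changes is only that the degenerate cases ($m=n$ and $m<n$) no longer need special handling, since dropping the slope restriction makes the three-case decomposition valid uniformly in $m$ and $n$.

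First I would reestablish the non-crossing claim without using slopes: if $(m-1,1)$ were joined to some $(k,0)$ with $0 \leq k \leq n-2$ and $(n-1,0)$ were simultaneously joined to some $(\ell,1)$ with $0 \leq \ell \leq m-2$, then a straightforward line-separation calculation shows that $(m-1,1)$ and $(k,0)$ lie on opposite sides of the line through $(n-1,0)$ and $(\ell,1)$, forcing the two segments to cross at a non-configuration point. Hence in any subdivision of $P_{m,n}$, at least one of the rightmost vertices is ``isolated'' in the sense of being connected only to its left neighbor and to the opposite corner, and I split the total count by inclusion-exclusion over the events that $(m-1,1)$ is isolated, $(n-1,0)$ is isolated, and that both are isolated.

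For each of these three cases I would run the same $2$-to-$1$ bijection as in the bimonotone proof, pairing subdivisions by toggling the diagonal from $(m-2,1)$ to $(n-1,0)$ and mapping to subdivisions of $P_{m-1,n}$, $P_{m,n-1}$, and $P_{m-1,n-1}$ respectively. Adding the diagonal to a face is always valid because a chord of a convex polygon yields two convex pieces; the harder direction is removing it, which requires the merged region $R \cup T$ (with $T$ the corner triangle on $(m-1,1), (m-2,1), (n-1,0)$ and $R$ the rightmost face of the induced smaller subdivision) to remain convex. I expect this convexity check to be the main obstacle, and I would handle it by angle summation at $(m-2,1)$ and $(n-1,0)$: the two interior angles in $R \cup T$ equal the sums of the respective angles in $R$ and in $T$, and these sums are bounded by the interior angles of the convex hull of $P_{m,n}$ at those points, which are at most $180^\circ$. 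Adding up the three cases yields $A_{m,n} = 2A_{m-1,n} + 2A_{m,n-1} - 2A_{m-1,n-1}$, as claimed.
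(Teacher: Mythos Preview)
Your proposal is correct and follows essentially the same approach as the paper: the paper's entire proof is a two-sentence pointer back to the $m>n$ case of the bimonotone lemma, invoking the same inclusion--exclusion on the connectivity of the two rightmost vertices. You carry out exactly that plan, but you add rigor the paper omits---namely, a slope-free verification of the non-crossing claim and an explicit convexity check for the merged face in the $2$-to-$1$ bijection---so your argument is a strict elaboration of the paper's, not a different route.
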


\begin{proof}
We again consider the connectivity of the top right and bottom right vertices and use the inclusion-exclusion principle. Using the same reasoning as for the $m>n$ case for $B_{m,n}$, we get the same recursion.
\end{proof}

Expressions for $A_{m,n}$ can now be found similarly, as shown in Table~\ref{tab:sub}.
\begin{table}[h]
\centering
\begin{tabular}{l|l}
$n$ & $A_{m,n}$ \\\hline
1 & $2^{m-2}$ \\
2 & $2^{m-2}(m+1)$ \\
3 & $\frac{2^{m-2}}{2}(m^2+5m+2)$ \\
4 & $\frac{2^{m-2}}{6}(m^3+12m^2+29m+6)$ \\
5 & $\frac{2^{m-2}}{24}(m^4+22m^3+131m^2+206m+24)$
\end{tabular}
\caption{\label{tab:sub}Number of Subdivisions of $P_{m,n}$.}
\end{table}

\section{General Form}\label{sec:general_form}
Next, we use the recursions from Section~\ref{sec:recursions} to find the general forms of the numbers of bimonotone and total subdivisions of $P_{m,n}$. We begin with bimonotone subdivisions.

\begin{theorem}\label{thm:1}
The number of bimonotone subdivisions of $P_{m,n}$ is given by $B_{m,n} = \frac{2^{m-2}}{(n-1)!}P_n(m)$, where $P_n(m)$ is some monic polynomial with degree $n-1$.
\label{thm:genbimonotone}
\end{theorem}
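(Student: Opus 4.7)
My plan is to argue by induction on $n$. The base case $n=1$ is a direct count: in $P_{m,1}$ the only candidate internal edges are the diagonals from the unique bottom vertex $(0,0)$ to the $m-2$ interior top vertices, each of positive slope and mutually non-crossing, so every subset of these diagonals produces a distinct bimonotone subdivision. Hence $B_{m,1}=2^{m-2}$, matching the formula with $P_1(m)\equiv 1$.

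For the inductive step, I would eliminate the exponential factor by setting $f_n(m) := B_{m,n}/2^{m-2}$. Under this normalization, the recursion from the previous lemma collapses (for $m>n$) to
\[
f_n(m) - f_n(m-1) \;=\; 2f_{n-1}(m) - f_{n-1}(m-1),
\]
together with the boundary condition $f_n(n) = 2 f_{n-1}(n)$ coming from the $m=n$ case. By the inductive hypothesis $f_{n-1}(m) = \tfrac{1}{(n-2)!}P_{n-1}(m)$ with $P_{n-1}$ monic of degree $n-2$, so the right-hand side is a polynomial in $m$ of degree exactly $n-2$ whose leading coefficient is $(2-1)\cdot\tfrac{1}{(n-2)!} = \tfrac{1}{(n-2)!}$.

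To finish, I would invoke the standard fact that a sequence on integers whose first difference agrees with a polynomial of degree $d$ and leading coefficient $c$ is itself a polynomial of degree $d+1$ with leading coefficient $c/(d+1)$, uniquely determined up to the additive constant pinned down by $f_n(n)$. This makes $f_n(m)$ a polynomial in $m$ of degree $n-1$ with leading coefficient $\tfrac{1}{(n-2)!}\cdot\tfrac{1}{n-1} = \tfrac{1}{(n-1)!}$. Defining $P_n(m) := (n-1)!\, f_n(m)$ therefore gives a monic polynomial of degree $n-1$, and $B_{m,n} = \tfrac{2^{m-2}}{(n-1)!} P_n(m)$ follows.

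The only real obstacle is careful bookkeeping of leading coefficients through the recursion, together with the observation that the constant term fixed by the boundary condition $f_n(n) = 2 f_{n-1}(n)$ does not disturb the degree or leading-coefficient calculation. Once the normalization $f_n(m) = B_{m,n}/2^{m-2}$ is in place and the recursion is rewritten in difference-equation form, the rest is routine antidifference calculus.
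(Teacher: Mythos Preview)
Your argument is correct and follows essentially the same inductive strategy as the paper: verify $B_{m,1}=2^{m-2}$, then use the recursion together with the fact that summing (antidifferencing) a degree-$(n-2)$ polynomial yields a degree-$(n-1)$ polynomial with the appropriate leading coefficient. Your normalization $f_n(m)=B_{m,n}/2^{m-2}$ is a clean way to strip off the exponential factor before analyzing the recursion; the paper instead carries the $2^{m-2}$ through, unfolds the recursion in $m$ explicitly, and invokes Faulhaber's formula to read off the leading term---but this is exactly the ``standard antidifference fact'' you cite, so the two arguments are the same in substance.
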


\begin{proof}
We use induction. First, $B_{m,1} = 2^{m-2}$, because for each top vertex except $(0,1)$ and $(m-1,1)$, the vertex may connect to the bottom vertex or may not. Now we consider $B_{m,n}$ in terms of $P_{n-1}(m)$. From the recursion,
\[B_{m,n} = \frac{2^{m-2}}{(n-2)!} \left(2P_{n-1}(m)-P_{n-1}(m-1)\right) + 2B_{m-1,n}. \]
Plugging in $B_{m-1,n}$ and so on gives
\begin{align*}
B_{m,n} &= \frac{2^{m-2}}{(n-2)!} \left(2P_{n-1}(m) + \sum_{i=n+1}^{m-1}\left(-P_{n-1}(i)+2P_{n-1}(i)\right) -P_{n-1}(n)\right) + 2^{m-n}B_{n,n} \\ 
& = \frac{2^{m-2}}{(n-2)!} \left(2P_{n-1}(m) + \sum_{i=n}^{m-1}-P_{n-1}(i)+2P_{n-1}(i)\right) \\
&= \frac{2^{m-2}}{(n-2)!} \left(P_{n-1}(m)+ \sum_{i=n}^m P_{n-1}(i)\right).
\end{align*}

Let $S(m,p)$ be the sum of the $p$th powers of the first $m$ positive integers. Faulhaber's formula gives this as
\[S(m,p) = \sum_{k=1}^m k^{p} = \frac{m^{p+1}}{p+1}+\frac{1}{2}m^p+\sum_{k=2}^p \frac{B_{k}}{k!}p^{\underline{k-1}}m^{p-k+1} \]
where $p^{\underline{k-1}}=p!/[p-(k-1)]!$ and the $B_k$ are the Bernoulli numbers \cite{faulhaber}.

Let $P_{n-1}(m) = m^{n-2} + \sum_{i=0}^{n-3} a_i m^i$. Then from Faulhaber's formula,
\[B_{m,n} = \frac{2^{m-2}}{(n-2)!} \left(P_{n-1}(m) + (S(m,n-2) - S(n,n-2))  + \sum_{i=0}^{n-3} a_i(S(m,i)-S(n,i))\right).\]

Only $S(m,n-2)$ contains a $m^{n-1}$ term, which is $\frac{m^{n+1}}{(n+1)}$ from Faulhaber's formula. None of the other terms contain a higher degree term. Thus the $m^{n-1}$ term of $B_{m,n}$ is $\frac{2^{m-2}}{(n-1)!} m^{n-1}$. Since all other terms are polynomial, this is a polynomial.

Thus $B_{m,n}$ can be expressed in the form $B_{m,n} = \frac{2^{m-2}}{(n-1)!}P_n(m)$ where $P_n(m)$ is monic and of degree $n-1$.
\end{proof}

We prove a similar result for the total number of subdivisions.

\begin{theorem}\label{thm:2}
The number of subdivisions of $P_{m,n}$ is given by $A_{m,n} = \frac{2^{m-2}}{(n-1)!}Q_n(m)$, where $Q_n(m)$ is some monic polynomial of degree $n-1$.
\end{theorem}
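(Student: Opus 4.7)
The plan is to adapt the proof of Theorem~\ref{thm:1} with only minor modifications. The recursion for $A_{m,n}$ has the same form as the one satisfied by $B_{m,n}$ in the regime $m > n$, and the situation is in fact slightly cleaner here because it is valid for all $m, n \geq 2$ without any need to track the diagonal ($m = n$) or the vanishing regime ($m < n$) separately. I will proceed by induction on $n$.

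For the base case $n = 1$, the same combinatorial argument that gave $B_{m,1} = 2^{m-2}$ also shows $A_{m,1} = 2^{m-2}$: in $P_{m,1}$ the unique bottom vertex may or may not be joined to each of the $m - 2$ interior top vertices, and these choices produce distinct subdivisions with no convexity obstruction. Thus $Q_1(m) = 1$, monic of degree $0$. For the inductive step, assume $A_{m,n-1} = \frac{2^{m-2}}{(n-2)!}\,Q_{n-1}(m)$ with $Q_{n-1}$ monic of degree $n-2$. The recursion rearranges to
\[
A_{m,n} - 2A_{m-1,n} \;=\; 2A_{m,n-1} - 2A_{m-1,n-1} \;=\; \frac{2^{m-2}}{(n-2)!}\bigl(2Q_{n-1}(m) - Q_{n-1}(m-1)\bigr),
\]
and setting $c_m := A_{m,n}/2^{m-2}$ turns this into the first-order difference equation $c_m - c_{m-1} = \frac{1}{(n-2)!}\bigl(2Q_{n-1}(m) - Q_{n-1}(m-1)\bigr)$. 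Telescoping down to a small base value of $m$ (for instance $m = 1$, where $A_{1,n} = 2^{n-2}$ by a count symmetric to the $A_{m,1}$ case) expresses $c_m$ as a linear combination of partial sums of $Q_{n-1}$ plus an explicit polynomial correction.

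The final ingredient is Faulhaber's formula, applied exactly as in the proof of Theorem~\ref{thm:1}: termwise evaluation on the monic polynomial $Q_{n-1}$ of degree $n-2$ shows that $\sum_{i=1}^{m} Q_{n-1}(i)$ is a polynomial in $m$ of degree $n-1$ with leading coefficient $\frac{1}{n-1}$. After multiplying by the $\frac{1}{(n-2)!}$ prefactor, the leading coefficient of $c_m$ becomes $\frac{1}{(n-1)!}$, which is exactly what is needed to write $A_{m,n} = \frac{2^{m-2}}{(n-1)!}\,Q_n(m)$ with $Q_n$ monic of degree $n-1$. I anticipate no genuine obstacle: the only care required is in bookkeeping, namely verifying that the boundary contribution from $A_{1,n}$ and from reindexing the telescoping sum are polynomials in $m$ of degree at most $n-2$, so that they perturb only the lower-order coefficients of $Q_n$ and leave the leading $m^{n-1}$ term undisturbed.
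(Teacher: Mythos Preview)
Your proposal is correct and follows essentially the same route as the paper: induction on $n$ with base case $A_{m,1}=2^{m-2}$, telescoping the recursion $A_{m,n}-2A_{m-1,n}=2A_{m,n-1}-2A_{m-1,n-1}$ after dividing through by $2^{m-2}$, and then invoking Faulhaber's formula on the resulting partial sum of $Q_{n-1}$ to isolate the leading $\frac{1}{(n-1)!}m^{n-1}$ term. Your explicit identification of the boundary value $A_{1,n}=2^{n-2}$ (a constant in $m$) is a clean way to confirm that the telescoped remainder affects only lower-order coefficients; the paper handles this step slightly more implicitly but the argument is the same.
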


\begin{proof}
We again use induction. Exactly as for bimonotone subdivisions, we find $A_{m,1} = 2^{m-2}$, satisfying the base. Then,  considering $A_{m,n}$ in terms of $Q_{n-1}(m)$,
\[A_{m,n} = \frac{2^{m-2}}{(n-2)!} (2Q_{n-1}(m)-Q_{n-1}(m-1)) + 2A_{m-1,n}. \]
Plugging in $A_{m-1,n}$ and so on gives
\begin{align*}
A_{m,n} &= \frac{2^{m-2}}{(n-2)!} \left(2Q_{n-1}(m)+\sum_{i=1}^{m-1}-Q_{n-1}(i)+2Q_{n-1}(i)\right) \\ 
&= \frac{2^{m-2}}{(n-2)!} \left(Q_{n-1}(m)+\sum_{i=1}^{m-1}Q_{n-1}(i)\right).
\end{align*}

This proceeds in the same way as the proof of Theorem~\ref{thm:genbimonotone} without the $-S(n,n)$ term, which does not affect the result. Thus $A_{m,n}$ can be expressed in the form $A_{m,n} = \frac{2^{m-2}}{(n-1)!}Q_n(m)$ where $Q_n(m)$ is monic and of degree $n-1$.
\end{proof}

Note that the two forms are (asymptotically) identical.

\begin{corollary}
The number of bimonotone subdivisions of $P_{m,n}$ is asymptotically equivalent to the total number of subdivisions for large values of $m$.
\end{corollary}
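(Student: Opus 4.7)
The plan is to deduce the corollary directly from Theorems~\ref{thm:1} and \ref{thm:2}, which together have already done essentially all the work. Fix $n \geq 1$ and let $m \to \infty$. From the two theorems we have
\[
B_{m,n} \;=\; \frac{2^{m-2}}{(n-1)!}\,P_n(m), \qquad A_{m,n} \;=\; \frac{2^{m-2}}{(n-1)!}\,Q_n(m),
\]
where $P_n$ and $Q_n$ are both monic polynomials in $m$ of degree exactly $n-1$. Forming the ratio cancels the common $\frac{2^{m-2}}{(n-1)!}$ factor entirely, leaving
\[
\frac{B_{m,n}}{A_{m,n}} \;=\; \frac{P_n(m)}{Q_n(m)}.
\]

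Since $P_n$ and $Q_n$ have the same leading term $m^{n-1}$ and all lower-order terms are of degree at most $n-2$, the quotient $P_n(m)/Q_n(m)$ tends to $1$ as $m \to \infty$. I would write this out by dividing numerator and denominator by $m^{n-1}$ and observing that each non-leading term contributes $O(1/m)$. This establishes the asymptotic equivalence $B_{m,n} \sim A_{m,n}$ as $m \to \infty$ with $n$ fixed.

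There is really no obstacle here; the corollary is a direct consequence of the structural information provided by the two theorems, and the only subtlety worth flagging explicitly is that the statement is about $m \to \infty$ with $n$ held fixed (not a joint limit), since $B_{m,n} = 0$ whenever $m < n$, so no asymptotic comparison is possible if $n$ is allowed to grow with $m$. The leading coefficient being $1$ in both polynomials, which was explicitly verified inside the proofs of Theorems~\ref{thm:1} and \ref{thm:2} via the $\tfrac{m^{n+1}}{n+1}$ contribution from Faulhaber's formula, is the only fact that needs to be invoked.
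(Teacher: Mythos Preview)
Your proposal is correct and matches the paper's approach exactly: the paper does not even spell out a proof, simply remarking that ``the two forms are (asymptotically) identical'' immediately after Theorems~\ref{thm:1} and~\ref{thm:2}, and your argument makes this observation precise in the obvious way.
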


Now, we connect bimonotone subdivisions to the large Schröder numbers~\cite{faulhaber}. The $n$th large Schröder number $S_{n}$ is the number of paths from $(0,0)$ to $(n,n)$ where unit steps can be taken north, east, or northeast and no points on the path lie above the line $y=x$.

\begin{theorem}
The number of bimonotone subdivisions of the $2 \times n$ lattice grid $P_{m,n}$ is equal to $2^{n-2}$ multiplied by the $(n-1)$th large Schröder number.
\label{thm:schroder}
\end{theorem}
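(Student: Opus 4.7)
The plan is to prove $B_{n,n} = 2^{n-2}\,S_{n-1}$ by induction on $n$, with base case $n=2$ giving $B_{2,2} = 2 = 2^{0} \cdot S_{1}$. The idea is to decompose the chain of cross edges of a bimonotone subdivision at its first return to the diagonal, and match the resulting recursion with the standard large Schröder recursion $S_{n-1} = S_{n-2} + \sum_{k=0}^{n-2} S_{k}\,S_{n-2-k}$.

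First, I would observe that a bimonotone subdivision of $P_{n,n}$ is completely determined by its collection of cross edges. The bimonotone constraint forces each cross edge from $(t,1)$ to $(b,0)$ to satisfy $b \leq t$, and two cross edges are non-crossing exactly when their endpoints are componentwise comparable. Hence the cross edges form a totally ordered chain $(0,0) = p_{0} < p_{1} < \cdots < p_{r} = (n-1,n-1)$ in the componentwise-ordered poset $Q_{n} = \{(b,t) : 0 \leq b \leq t \leq n-1\}$, and every such chain yields a valid bimonotone subdivision whose polygons (the regions between consecutive cross edges) are automatically convex. Thus $B_{n,n}$ equals the number of such chains.

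Next, I would decompose each chain by the smallest $c \geq 1$ with $(c,c)$ on the chain, splitting it into an \emph{excursion} from $(0,0)$ to $(c,c)$ whose strictly interior elements lie in $\hat Q_{c} = \{(b,t) : 0 \leq b < t \leq c\}$, followed by an arbitrary chain from $(c,c)$ to $(n-1,n-1)$, the latter being a bimonotone subdivision of the shifted grid $P_{n-c,n-c}$. Letting $e_{c}$ denote the number of excursions to $(c,c)$, this yields
\[
B_{n,n} \;=\; \sum_{c=1}^{n-1} e_{c}\,B_{n-c,n-c}.
\]
The key combinatorial identity is $e_{c} = 2^{c}\,S_{c-1}$ for $c \geq 1$ (verified directly for small $c$: $e_{1}=2,\ e_{2}=8,\ e_{3}=48,\ e_{4}=352$ match $2S_{0},\ 4S_{1},\ 8S_{2},\ 16S_{3}$). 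Granting this identity, substituting the inductive hypothesis $B_{n-c,n-c} = 2^{n-c-2}\,S_{n-c-1}$ for $c \leq n-2$ together with the boundary value $B_{1,1} = 1$ for $c = n-1$ (which contributes an extra factor of $2$ beyond the induction pattern) collapses the sum to
\[
B_{n,n} \;=\; 2^{n-2}\Bigl(\sum_{k=0}^{n-3} S_{k}\,S_{n-2-k} + 2\,S_{n-2}\Bigr) \;=\; 2^{n-2}\,S_{n-1},
\]
the last equality being the Schröder recursion after pulling out the $k = n-2$ term from its convolution sum.

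The main obstacle is proving the excursion identity $e_{c} = 2^{c}\,S_{c-1}$. The natural approach is to apply the same first-return decomposition inside the excursions themselves, obtaining a recursion for $e_{c}$ that can be matched against an appropriately shifted Schröder recursion; the $2^{c}$ factor should arise from a binary ``include/exclude'' choice at each of the $c$ diagonal positions $(1,1), \ldots, (c,c)$. An attractive alternative is a direct bijection between excursions to $(c,c)$ and pairs $(\sigma, \varepsilon)$ with $\sigma$ a Schröder path from $(0,0)$ to $(c-1,c-1)$ and $\varepsilon \in \{0,1\}^{c}$, and constructing such a bijection cleanly is the primary technical hurdle in the argument.
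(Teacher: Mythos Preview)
Your decomposition is the same as the paper's, only phrased in poset language: in your chain encoding the diagonal elements $(c,c)$ are exactly the vertical cross edges, so ``first return to the diagonal'' is precisely the paper's ``leftmost internal vertical edge.'' The resulting recursion $B_{n,n}=\sum_{c=1}^{n-1} e_c\,B_{n-c,n-c}$ and the algebra reducing it to the Schr\"oder recurrence are correct and match the paper's computation.

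Where the two approaches diverge is in how the ``main obstacle'' $e_c = 2^{c} S_{c-1}$ is handled. The paper does \emph{not} prove this separately; instead it observes, via a shear that slides the top row of $P_{c,c}$ one unit to the right, that bimonotone subdivisions of $P_{c,c}$ biject with vertical-free bimonotone subdivisions of a parallelogram inside $P_{c+1,c+1}$, and that the latter are in $4$-to-$1$ correspondence with your excursions to $(c,c)$ (the factor of $4$ coming from whether each of the two sheared boundary diagonals is present). Thus $e_c = 4\,B_{c,c}$ directly, and the strong inductive hypothesis $B_{c,c} = 2^{c-2}S_{c-1}$ for $c<n$ immediately yields $e_c = 2^{c}S_{c-1}$. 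Your ``key combinatorial identity'' is therefore nothing more than the inductive hypothesis once the shear is in hand, and no independent bijection with Schr\"oder paths is needed.

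By contrast, your proposed routes to $e_c = 2^{c}S_{c-1}$ are underspecified. ``Applying the same first-return decomposition inside the excursions'' cannot work as stated, since excursions by definition contain no interior diagonal elements and hence have no first return; and the suggested ``include/exclude choice at each of the diagonal positions $(1,1),\ldots,(c,c)$'' is puzzling for the same reason. A direct bijection with pairs (Schr\"oder path, binary string) is plausible but would amount to re-proving the theorem for the smaller grid. The paper's shearing observation is the missing idea that collapses your obstacle into the induction.
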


\begin{proof}
We strongly induct on $n$. The large Schröder numbers are known to follow the relation $S_n = S_{n-1} + \sum_{k=0}^{n-1} S_k S_{n-1-k}$. First, $B_{1}$, the number of bimonotone subdivisions of a $2 \times 1$ grid, has little meaning, so we define it to be $\frac{1}{2}$ to fit the relationship with the Schröder numbers.

For a $2 \times n$ grid, the bimonotone subdivisions can be divided into the cases where the are no internal vertical edges, or where the leftmost vertical edge occurs at $x=k+1$.

\begin{figure}[h]
\centering
\includegraphics[width=0.4\textwidth]{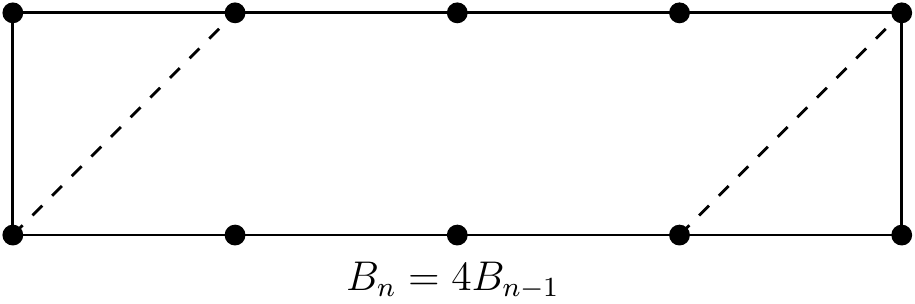}
\caption{\label{fig:schroeder1}No vertical edges.}
\end{figure}

The first case, shown in Figure~\ref{fig:schroeder1}, corresponds to the bimonotone subdivisions of a $2 \times (n-1)$ grid. We can see this by considering the $2 \times (n-1)$'s bimonotone subdivisions after shifting the top vertices of the grid one unit to the right. The shifted subdivision cannot contain vertical lines, as that would make the original not bimonotone. These shifted bimonotone subdivisions are also all those of the original $2 \times n$ grid, except the leftmost and rightmost edges of the $2 \times (n-1)$ grid can be present or not in the $2 \times n$ grid. This makes the number of bimonotone subdivisions in this case $4B_{n-1}$.

\begin{figure}[h]
\centering
\includegraphics[width=0.4\textwidth]{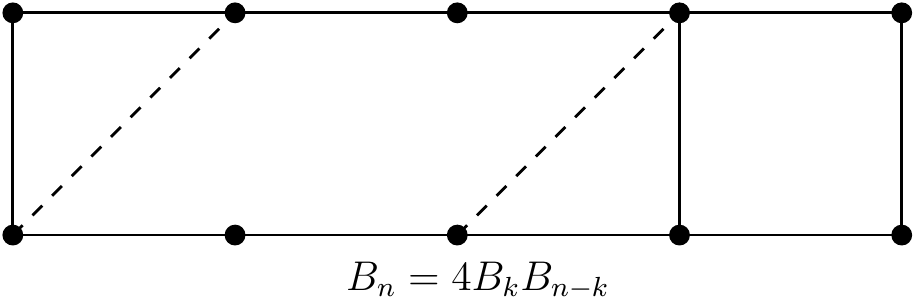}
\caption{\label{fig:schroeder2}Leftmost vertical edge at $x=k+1$.}
\end{figure}

If the leftmost internal vertical edge occurs at $x=k+1$, as shown in Figure~\ref{fig:schroeder2}, then we can separately consider the $2 \times (k+1)$ grid to the left and $2 \times (n-k)$ to the right. By the same reasoning as in the first case, the number of bimonotone subdivisions of the left side, which has no vertical edges, is $4B_{k}$. Note that for $k=1$, there are 2 bimonotone subdivisions of the left side, which agrees with our definition of $B_{1}$ as $\frac{1}{2}$. The number of bimonotone subdivisions of the right side is simply $B_{n-k}$. Thus the total is $4B_{k}B_{n-k}$.

Adding these, placing $2B_{n-1}$ into the summation as $4B_{1}B_{n-1}$, and plugging in $B_{k} = 2^{k-2}S_{k-1}$ for $k<n$, we get
\begin{align*}
B_n &= 2B_{n-1} + 4\sum_{k=1}^{n-1} B_k B_{n-k} \\
&= 2^{n-2} \left(S_{n-2} + \sum_{k=0}^{n-2} S_k S_{n-2-k}\right) \\
&= 2^{n-2}S_{n-1}.
\end{align*}
\end{proof}

\section{Bimonotone triangulations}\label{sec:bim_triangulations}
In this section, we consider the more specific problem of counting bimonotone triangulations. Because triangulations can be counted using the flip method~\cite{triangulations}, we apply the method to the bimonotone case.

\begin{definition}
A \textit{flip} takes a quadrilateral in a triangulation and switches which of its two diagonals is included in the triangulation.
\end{definition}

\begin{theorem}
Every bimonotone triangulation using all vertices of an equally spaced $m \times n$ lattice grid can be flipped to every other bimonotone triangulation of the grid.
\end{theorem}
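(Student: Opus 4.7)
The plan is to fix a canonical bimonotone triangulation $T_{\mathrm{can}}$ — the one in which every unit lattice cell $[i,i{+}1]\times[j,j{+}1]$ is split by its positive-slope diagonal — and show that every bimonotone triangulation $T$ of the $m \times n$ grid can be reduced to $T_{\mathrm{can}}$ via a sequence of bimonotone flips. Connectivity of the bimonotone flip graph then follows by concatenating the paths $T_1 \to T_{\mathrm{can}}$ and the reverse of $T_2 \to T_{\mathrm{can}}$.

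As a monovariant I use
\[
\Phi(T) \;=\; \sum_{e \in T} \bigl((\Delta x_e)^2 + (\Delta y_e)^2\bigr),
\]
summed over the interior edges of $T$. This is uniquely minimized at $T_{\mathrm{can}}$ (whose edges all have squared length $1$ or $2$): any horizontal or vertical edge of length $\geq 2$ would contain an interior lattice point and hence cannot appear in a triangulation using all grid points, so every edge in $T \setminus T_{\mathrm{can}}$ has slope strictly between $0$ and $\infty$, span $(c-a)+(d-b) \geq 3$, and in particular is strictly longer than any edge of $T_{\mathrm{can}}$. The key technical lemma is then: if $T$ is bimonotone and $T \neq T_{\mathrm{can}}$, some edge $e \in T$ admits a flip that remains bimonotone and strictly decreases $\Phi$.

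To produce $e$, I would select an edge $e=(a,b)(c,d)$ in $T \setminus T_{\mathrm{can}}$ of \emph{maximum} Euclidean length, so $c>a$, $d\geq b$, and span $(c-a)+(d-b)\geq 3$. Let $v$ (above $e$) and $w$ (below $e$) be the apices of the two triangles of $T$ adjacent to $e$, forming the quadrilateral $Q$ with $e$ as a diagonal. Bimonotonicity forces $v$ and $w$ into wedge regions above and below $e$; maximality of $e$ pins them close to $e$, because any other edge of those two adjacent triangles must be no longer than $e$. A case analysis then shows that $Q$ is strictly convex — no three of its vertices can be collinear without producing an even longer non-canonical edge, contradicting maximality — and that the other diagonal $vw$ has nonnegative slope or is vertical, hence is bimonotone. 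A direct length comparison yields $\ell(vw) < \ell(e)$, so flipping $e$ to $vw$ is a bimonotone move that decreases $\Phi$. The prototype is the quadrilateral $(0,0),(1,0),(2,1),(1,1)$, whose long diagonal $(0,0)$--$(2,1)$ flips to the short vertical diagonal $(1,0)$--$(1,1)$.

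I expect the main obstacle to be the case analysis constraining $v$ and $w$. In particular, I must rule out configurations in which $Q$ is convex but the alternate diagonal has \emph{negative} slope, which would take the flip outside the bimonotone class. The plan is to exploit the following structural observation: since every edge of $T$ has nonnegative slope (or is vertical), the "NE cone" at each endpoint of $e$ contains the interior of one of the two adjacent triangles while the "SW cone" contains the other, forcing $v$ and $w$ to lie in positions that make the segment $vw$ bimonotone. A parallel bookkeeping verifies that $\Phi$ strictly drops; iterating the lemma then drives $\Phi$ monotonically to its minimum at $T_{\mathrm{can}}$, completing the proof.
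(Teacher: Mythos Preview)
Your overall strategy coincides with the paper's: both reduce every bimonotone triangulation to the canonical one by repeatedly flipping a longest non-canonical edge to something shorter and still bimonotone. Your monovariant $\Phi$ is just a repackaging of the paper's ``take the longest diagonal and shorten it'' step, so at the level of architecture there is nothing new to compare.

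The gap is exactly where you flag it: showing that the alternate diagonal $vw$ of the quadrilateral $Q$ is bimonotone. Your NE/SW-cone observation, together with maximality of $e$, does pin $v$ and $w$ into the bounding box $[a,c]\times[b,d]$ (each side of the two adjacent triangles must be bimonotone and no longer than $e$), but that alone does not rule out $v$ sitting northwest of the midpoint of $e$ and $w$ southeast of it, which would make $vw$ negatively sloped. Nothing in ``all incident edges are bimonotone'' prevents this configuration.

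The paper closes this gap with a lattice-arithmetic argument rather than a cone argument. Writing the longest edge as $e=(0,0)(x_d,y_d)$ with $\gcd(x_d,y_d)=1$, the vertical distances from $e$ to the nearest lattice point below it in each column are a permutation of $\tfrac{1}{x_d},\dots,\tfrac{x_d-1}{x_d}$. Because the triangulation uses every lattice point, each adjacent triangle is empty and hence has area $\tfrac12$, which forces the apex on each side to be the \emph{unique closest} lattice point (distance $\tfrac{1}{x_d}$); by central symmetry the two apices then form a parallelogram with $e$. Finally the paper checks that the closest lattice point below $e$ cannot lie strictly below and to the right of the midpoint of $e$, since every lattice point in that region is at vertical distance at least $\tfrac12$ from $e$, while the pairing $i\leftrightarrow x_d-i$ of columns guarantees some point at distance at most $\tfrac12$ elsewhere. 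That is precisely the bimonotonicity of $vw$. This arithmetic pinning of $v$ and $w$ is the missing ingredient in your sketch; once you have it, your length comparison $\ell(vw)<\ell(e)$ and the descent of $\Phi$ go through as you describe.
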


\begin{proof}
We will show that every bimonotone triangulation can be flipped to the triangulation with every vertical edge and positive-slope diagonal $(i,j)-(i+1,j+1)$ present (cf. Figure~\ref{fig:main}).
\smallskip

\begin{figure}[h]
\centering
\includegraphics[width=0.35\textwidth]{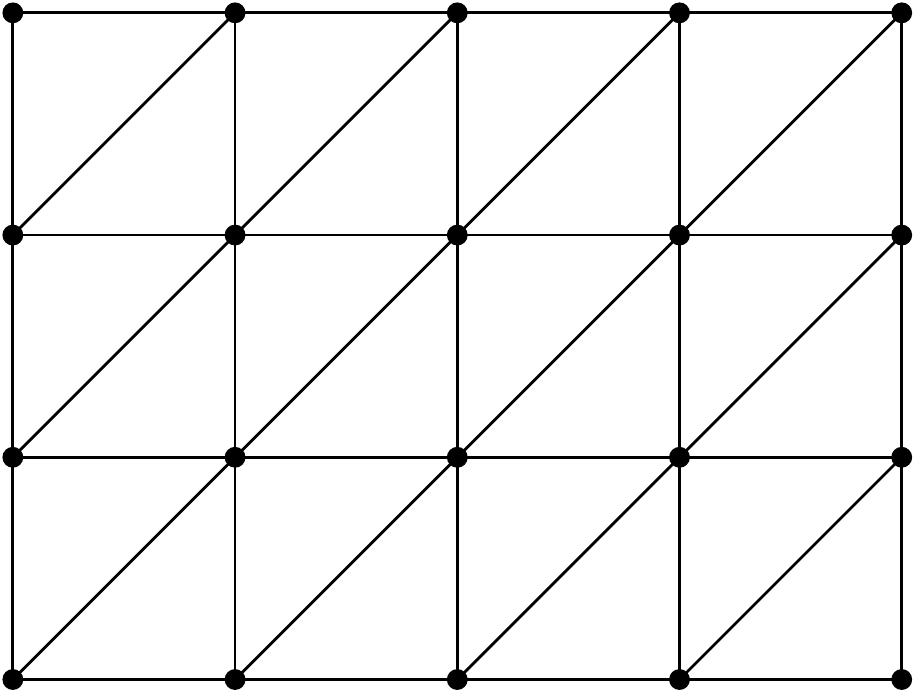}
\caption{\label{fig:main} Every bimonotone triangulation that uses all grid points can be flipped to this triangulation.}
\end{figure}

Take the longest diagonal of a particular triangulation. Let this range from $(0,0)$ to $(x_d, y_d)$. We know that $x_d$ and $y_d$ are relatively prime, as the diagonal would intersect a lattice point otherwise.

We will first prove that the two vertices completing the two triangles using the diagonal must be the two points closest to the edge on either side. This will allow us to prove that the quadrilateral that has this diagonal can be flipped and remain bimonotone.

Consider the closest points below the line for each value of $x$. If $s$ is the slope of the line, then these points' vertical distances from the diagonal are the fractional parts $\{s\}, \{2s\}, \dots, \{(x_d-1)s\}$. As each of these has numerator $ky_d \mod{x_d}$ and $y_d$ is relatively prime to $x_d$, this is an ordering of $\frac{1}{x_d}, \frac{2}{x_d}, \dots, \frac{x_d-1}{x_d}$.

Let the closest point be at $(x_c,y_c)$. The vertical distance from this point to the diagonal is $\frac{1}{x_d}$. We must prove that no lattice points lie strictly within the two triangles bounded by the extensions of the two lines connecting the closest vertex and one of the diagonal's endpoints (cf. the grey triangles in Figure~\ref{fig:triangulation}). Otherwise, a triangulation could use a point in this region as the third vertex instead of the closest point. The greatest vertical distance within the region is $\frac{1}{x_c}$ for the right triangle and $\frac{1}{x_d} \cdot \frac{y_d}{y_d-y_c}$ for the left triangle, by similar triangles.

\begin{figure}[h]
\centering
\includegraphics[width=0.5\textwidth]{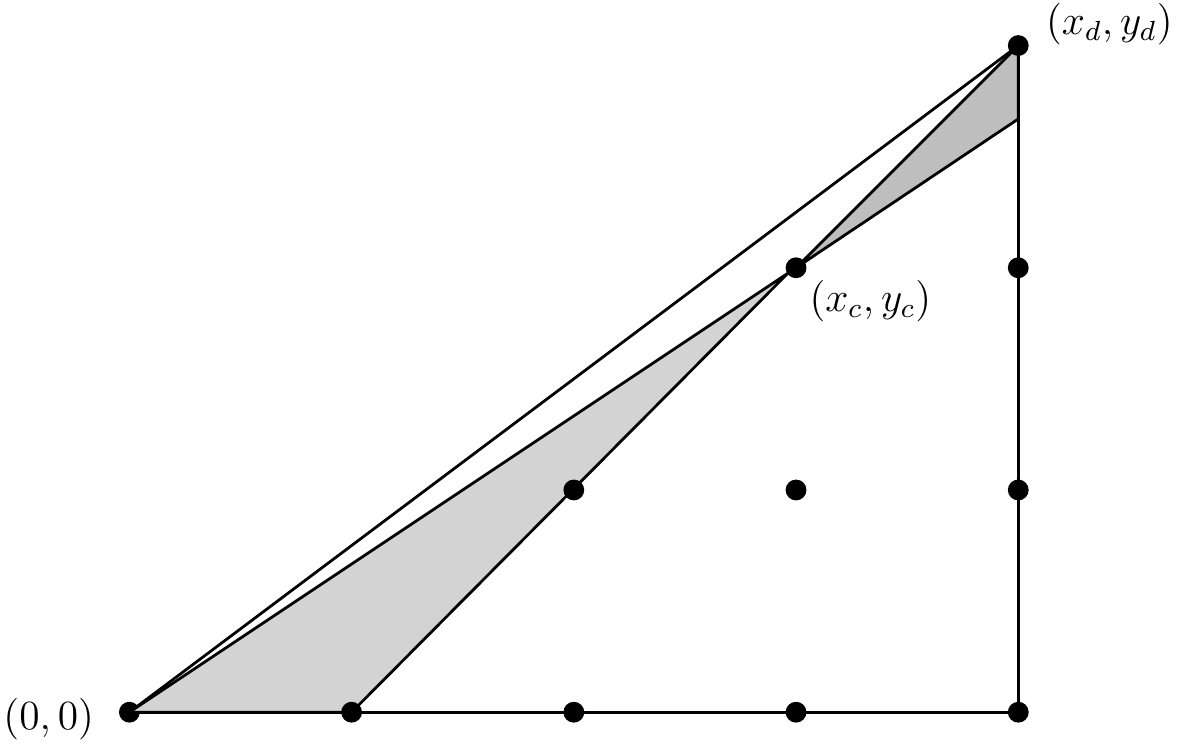}
\caption{\label{fig:triangulation}No points can lie within the two shaded triangles.}
\end{figure}

For each integer $2 \leq k \leq x_d-1$, we ensure the point with distance $\frac{k}{x_d}$ does not lie within the two triangles. For now, only consider whether the point is in the triangle on the right. Either $x_c < \frac{x_d}{k}$ or $x_c \geq \frac{x_d}{k}$. If $x_c < \frac{x_d}{k}$, then by similar triangles the point with distance $\frac{ky_d}{x_d}$ lies on the line connecting the left vertex of the diagonal and the closest point, to the right of the closest point. Thus it is not strictly within the triangle. If $\frac{x_d}{k} \leq x_c$, then $\frac{k}{x_d} \geq \frac{1}{x_c}$, the maximum vertical distance in the triangle, so the point cannot lie within the triangle. By symmetry, the same is true for the left triangle. Therefore, only the closest point on one side of the diagonal can be the third vertex for a triangle using it.

By symmetry, the closest points on either side of the diagonal are in mirrored positions, so that the quadrilateral formed is a parallelogram. We will now prove that when this quadrilateral is flipped, its new diagonal is shorter and remains bimonotone.

\begin{figure}[h]
\centering
\includegraphics[width=0.5\textwidth]{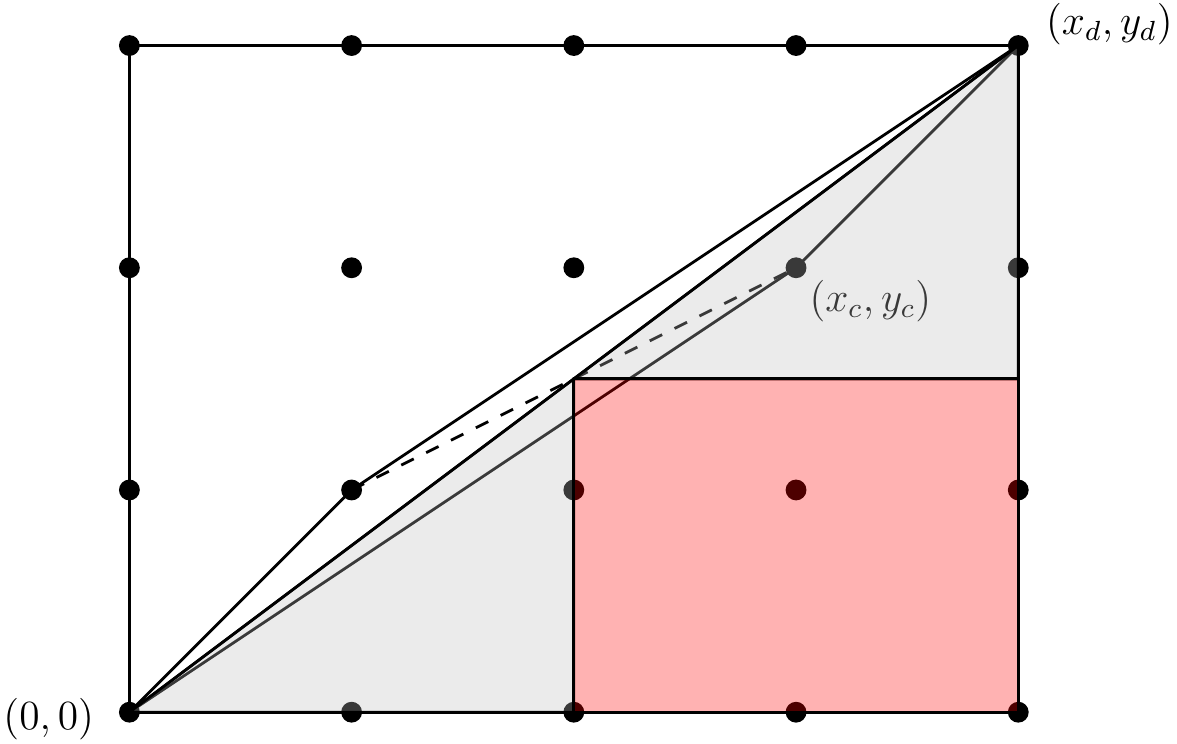}
\caption{\label{fig:flip}The closest point must lie in the two gray triangles, not the red rectangle.}
\end{figure}

Consider the vertex $(x_c,y_c)$ of the triangle below the diagonal. For the flipped diagonal to be bimonotone, the vertex must be either above or to the left of the midpoint of the diagonal (nonstrictly),i.e., the grey regions in Figure~\ref{fig:flip}. This condition is always met. If $s$ is the slope of the diagonal, the closest point to the diagonal in the region strictly below and to the right of the midpoint, i.e., the red region in Figure~\ref{fig:flip}, is either $\frac{1}{2} + \frac{s}{2}$ or $1 + \frac{s}{2}$ or $\frac{1}{2} + s$ vertically away from the diagonal. There are guaranteed to be points closer than $\frac{1}{2}$ in the desired (grey) regions: By symmetry, if the closest point in the $i$th column of points is $t$ vertically below the diagonal, then the closest point in the $(x_d-i+1)$th row is $1-t$ below. So there exist points less than or equal to $\frac{1}{2}$ vertically below the diagonal, and, therefore, the closest point to the diagonal has to be in the desired (grey) regions. Therefore, we can flip the diagonal to replace it with one that is still bimonotone but shorter.

This can be repeated for all diagonals until the configuration with only horizontal and vertical edges and unit diagonals is reached. Therefore, all bimonotone triangulations of an $m \times n$ grid are connected by flips.
\end{proof}

\section{Algorithms}\label{sec:algorithms}

We now present an algorithm to count all subdivisions and all bimonotone subdivisions of an $m \times n$ grid. 
The algorithm considers all possible internal edges between the vertices of the grid. To count bimonotone subdivisions, only edges of nonnegative or vertical slope are considered.
All possible combinations of edges are then tested for being a subdivision. For each pair of edges, we check if they intersect within the grid outside a vertex. If none do, then we check convexity: For each vertex in the interior of the grid, the angles between consecutive edges protruding out of the vertex must be less than or equal to $\pi$. Successful subdivisions are added to the count until all are counted.
\smallskip
\smallskip

	\begin{algorithm}[H]
		\label{alg:QT}
		\caption{Count all subdivisions and all bimonotone subdivisions of an $m\times n$ grid.}
		\LinesNumbered
		\DontPrintSemicolon
		\SetAlgoLined
		\SetKwInOut{Input}{Input}
		\SetKwInOut{Output}{Output}
		\Input{The sizes $m$ and $n$ of an $m\times n$ grid.}
		\Output{The numbers of all subdivisions and all bimonotone subdivisions of the grid.}
		\BlankLine
		\For{All subsets of (bimonotone) edges between the vertices of the $m\times n$ grid}{
		\If{Any two edges intersect at a point not on the grid}{\textbf{Continue.}}}
		\For{Each vertex from the grid with an edge adjacent to it}{\If{The angles between consecutive edges protruding out of the vertex are at most $\pi$}{Add 1 to the count.}}
		\Output{The numbers of bimonotone and all subdivisions.}
	\end{algorithm}

\smallskip
\smallskip
The output of the algorithm for small values of $m,n$ is shown in Table~\ref{tab:2byn} and Table~\ref{tab:3byn}.

\begin{table}[h]
\centering
\begin{tabular}{c|c|c}
$n$ & $B_{2,n}$ & $A_{2,n}$ \\ \hline
2 & 2 & 3 \\
3 & 12 & 26 \\
4 & 88 & 252 \\
5 & 720 & 2568 \\
6 & 6304 & 26928 
\end{tabular}
\caption{\label{tab:2byn}Number of Subdivisions of $2 \times n$ grids.}
\end{table}

\begin{table}[h]
\centering
\begin{tabular}{c|c|c}
$n$ & $B_{3,n}$ & $A_{3,n}$ \\ \hline
2 & 12 & 26 \\
3 & 528 & 2224 \\
4 & 34152 &
\end{tabular}
\caption{\label{tab:3byn}Number of Subdivisions of $3 \times n$ grids.}
\end{table}

We now present a second algorithm to count the number of bimonotone triangulations of an $m \times n$ grid for which every point in the grid is a vertex of a triangle. This is a breadth-first search algorithm using flips. Starting from the arrangement including every vertical, horizontal, and positively sloped unit diagonal edge, quadrilaterals can be flipped so that the diagonal present is switched. This is attempted for every possible flip, and resulting arrangements are checked if they are different from previous ones. If new, they are added to the list.

\smallskip

	\begin{algorithm}
		\label{alg:QT}
		\caption{Count all bimonotone triangulations of an $m\times n$ grid that use all points in the grid.}
		\LinesNumbered
		\DontPrintSemicolon
		\SetAlgoLined
		\SetKwInOut{Input}{Input}
		\SetKwInOut{Output}{Output}
		\Input{The sizes $m$ and $n$ of an $m\times n$ grid.}
		\Output{The numbers of all bimonotone triangulations that use all points in the grid.}
		\BlankLine
		Let \textbf{S} be a set with one element: the triangulation containging all unit vertical and horizontal edges as well as the positively sloped diagonal edges of the form $(i,j) - (i+1,j+1)$.
		
		Let \textbf{Que} be a queue and add the same triangulation to it.
		
		\While{\textbf{Que} is not empty}{Let \textbf{T} be the next element in \textbf{Que} (remove \textbf{T} from \textbf{Que}).
		
		\For{All possible flips of diagonals in \textbf{T}}{\If{the new triangulation is \underline{not} in \textbf{S}}{Add it to \textbf{S} and to \textbf{Que}}}}
		\Output{The size of S.}
	\end{algorithm}

\smallskip

\section{Future Research}\label{sec:conclusion}

It would be interesting to prove the following conjecture on the total number of subdivisions of a $2 \times n$ grid.

\begin{conjecture}
The number of subdivisions of a $2 \times n$ grid is equal to $2^{n-2}$ multiplied by the $(n-1)$th central Delannoy number.
\end{conjecture}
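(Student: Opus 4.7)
My plan is to mirror the strategy used in the proof of Theorem~\ref{thm:schroder} by finding a recurrence for the central Delannoy numbers that admits a parallel combinatorial interpretation on subdivisions. Write $A_n$ and $B_n$ for the number of (respectively bimonotone and total) subdivisions of the $2\times n$ grid. Direct verification, or algebraic manipulation of the generating functions $\sum D_n x^n = (1 - 6x + x^2)^{-1/2}$ and $\sum S_n x^n = (1 - x - \sqrt{1 - 6x + x^2})/(2x)$, gives the identity
\[ D_n = D_{n-1} + 2 \sum_{k=0}^{n-1} D_k \, S_{n-1-k}. \]
Substituting the conjectured equality $A_n = 2^{n-2} D_{n-1}$ together with the identity $B_n = 2^{n-2} S_{n-1}$ proved in Theorem~\ref{thm:schroder}, and setting $A_1 = B_1 = \tfrac{1}{2}$ as in the Schröder proof, this translates to the equivalent statement
\[ A_n = 2 A_{n-1} + 8 \sum_{j=1}^{n-1} A_j \, B_{n-j}. \]
So it suffices to prove this recurrence for the subdivision counts.

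I would establish the recurrence by classifying subdivisions of the $2 \times n$ grid according to the position of their leftmost internal vertical edge, in direct analogy with the proof of Theorem~\ref{thm:schroder}. The ``no internal vertical edges'' case should contribute $2 A_{n-1}$ via a shifting argument --- the top row is shifted one unit to the right and identified with a $2 \times (n-1)$ grid --- adapted to permit negative-slope edges in the two corner triangles left over after the shift. The ``leftmost internal vertical edge at $x = j$'' case would split the grid into a left $2 \times (j+1)$ block with no internal vertical edges and a right $2 \times (n-j)$ block, with the target contribution $8 A_j \, B_{n-j}$.

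The main obstacle is the asymmetry of the product $A_j \cdot B_{n-j}$: one has to justify that the right block is forced to be bimonotone while the left block may be an arbitrary subdivision. In the Schröder setting both factors are $B$'s because everything is bimonotone by hypothesis, so no such issue arises. My best guess is that the resolution requires a more refined choice of separator --- for example, the leftmost internal vertical edge to the right of which no negative-slope edge appears --- which would force bimonotonicity on the right at the cost of a constant-factor adjustment on the left, or else an auxiliary count that tracks presence of negative-slope edges and is then closed against $A_n$ and $B_n$ using Theorem~\ref{thm:schroder}. If this direct combinatorial attack resists, a fallback is to invoke Theorem~\ref{thm:2}: the conjecture reduces to the polynomial identity $Q_n(n) = (n-1)! \, D_{n-1}$, which can be attacked inductively using the recurrence relating $Q_n$ to partial sums of $Q_{n-1}$ established in the proof of that theorem.
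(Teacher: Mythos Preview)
The statement you are addressing is listed in the paper as a \emph{conjecture} in the Future Research section; the paper does not prove it and offers no argument toward it. There is therefore no paper proof to compare your proposal against.

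On the substance of your plan: the generating-function identity $D_n = D_{n-1} + 2\sum_{k=0}^{n-1} D_k S_{n-1-k}$ is correct, and your translation to $A_n = 2A_{n-1} + 8\sum_{j=1}^{n-1} A_j B_{n-j}$ is algebraically sound and checks numerically against Table~\ref{tab:2byn}. But the gap you flag is genuine and, as stated, unresolved. Splitting a general subdivision at its leftmost internal vertical edge gives a product of the form (no-vertical-edge subdivisions on the left) $\times\, A_{n-j}$, with the \emph{arbitrary} count on the right; there is no mechanism forcing the right block to be bimonotone, so the factor $B_{n-j}$ has no evident combinatorial source. Moreover, the shift argument from Theorem~\ref{thm:schroder} does not transfer cleanly to the non-bimonotone setting: shifting the top row by one unit turns slope~$-1$ edges into vertical edges and vice versa, so ``no internal vertical edge'' on the $2\times n$ grid does not correspond to an unrestricted $2\times(n-1)$ subdivision, and the claimed contribution $2A_{n-1}$ for that case is not justified. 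Your suggested refinements (a different separator, an auxiliary count) are plausible directions but are not carried out, and the fallback $Q_n(n)=(n-1)!\,D_{n-1}$ is just a restatement of the conjecture; the recurrence in the proof of Theorem~\ref{thm:2} expresses $Q_n(n)$ through \emph{all} values $Q_{n-1}(1),\ldots,Q_{n-1}(n)$, so a straightforward induction on $n$ does not close. In short, your proposal is a reasonable opening move on an open problem, with the decisive step still missing.
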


The central Delannoy numbers~\cite{faulhaber} are related to the large Schröder numbers, which are similarly involved in the expression for the number of bimonotone subdivisions of a $2 \times n$ grid given in Theorem~\ref{thm:schroder}. The $n$th central Delannoy number counts the number of paths from $(0,0)$ to $(n,n)$ where steps can be taken east, north, or northeast. The difference between these and the large Schröder numbers is that these paths can cross the line $y=x$.

\section{Acknowledgements}

The research presented here took place mostly in 2018 via the MIT PRIMES USA program. We are grateful to Dr. Tanya Khovanova, Dr. Slava Gerovitch, and Prof. Pavel Etingof for organizing the program. At the time ER was supported by an NSF postdoctoral fellowship (DMS-170-3821) and is currently supported by a National Sciences and Engineering Research Council of Canada Discovery Grant (DGECR-2020-00338).

\end{document}